\numberwithin{equation}{theorem}
\numberwithin{equation}{theorem}
\newcommand{\perf}{\textnormal{perf}}
\newcommand\reallywidehat[1]{%
\savestack{\tmpbox}{\stretchto{%
  \scaleto{%
    \scalerel*[\widthof{\ensuremath{#1}}]{\kern.1pt\mathchar"0362\kern.1pt}%
    {\rule{0ex}{\textheight}}%WIDTH-LIMITED CIRCUMFLEX
  }{\textheight}% 
}{2.4ex}}%
\stackon[-6.9pt]{#1}{\tmpbox}%
}
\begin{document}

\title{BCM-thresholds of hypersurfaces}
\author[S. Rodr\'iguez-Villalobos]{Sandra Rodr\'iguez-Villalobos}
\address{Department of Mathematics, University of Utah, 155 South 1400 East, 
Salt Lake City, UT 84112, USA}
\email{rodriguez@math.utah.edu}
\thanks{The author was partially supported by NSF grant DMS-2101800}
\maketitle

\begin{abstract}
	In this paper, we use big Cohen-Macaulay algebras to define a characteristic free analog of the $F$-thresholds, which we call BCM-thresholds, in the case of principal ideals.
	We prove that, similarly to the case of the $F$-thresholds, the set of BCM-thresholds and the set of BCM-jumping numbers agree.
	We also relate some BCM-thresholds to splittings of maps from the ring to a big Cohen-Macaulay algebra.
\end{abstract}
\section{Introduction}

Given a Noetherian ring $R$ of prime characteristic $p$, $J$ an ideal of $R$ and $f\in\sqrt{J}$, the $F$-threshold of $f$ with respect to $J$ is given by 
$$c^J(f)=\lim_{e\to\infty}\frac{\nu_e^J(f)}{p^e},$$
where $\nu_e^J(f)=\max\{n\mid f^n\not \in J^{[p^e]}\}$. 
These invariants were introduced by Musta\c ta, Takagi and Watanabe in the case of an $F$-finite regular local ring \cite{MustataTakagiWatanabeFThresholdsAndBernsteinSato} and De Stefani, Nu\~nez-Betancourt and Perez proved the existence of their defining limit for all $F$-finite rings (\cite{DeStefaniNunezBetancourtPerezFThresholdsAndRelated}).
The $F$-thresholds are known to be related to tight closure, integral closure, multiplicities and $a$-invariants (\cite{HunekeMustataTakagiWatanabeFThresholdsTightClosureIntClosureMultBounds, MR3778235,DeStefaniNunezBetancourtPerezFThresholdsAndRelated}).
When $R$ is regular and $F$-finite, the set of $F$-thresholds agrees with the set of $F$-jumping numbers for the test ideal of $f$ (\cite{MustataTakagiWatanabeFThresholdsAndBernsteinSato, BlickleMustataSmithDiscretenessAndRationalityOfFThresholds}).  

In this paper, we express the $F$-threshold using $R^+$ for strongly-$F$-regular rings.
We use this result to define a characteristic free analog of the $F$-thresholds, which we call BCM-thresholds.
\begin{definition}[see \autoref{BCMThresholdDef}]
	Let $R$ be a Noetherian ring, not necessarily of prime characteristic, and $B$ a big Cohen-Macaulay $R^+$-algebra.
	We define 
	$$c^J_B(f)=\inf\left\{t\in\R_{\geq0}\middle| (fR^+)^{>t}\sub JB \right\}=
	\sup\left\{t\in\R_{\geq0}\middle| (fR^+)^{>t}\not\sub JB \right\}$$
	where $(fR^+)^{>t}=\{g\in R^+\mid g^n\in f^m R^+ \text{ for some } \frac{m}{n}>t\}.$
\end{definition}

In our main result, we relate the set of BCM-thresholds and the set of BCM-jumping numbers for the BCM-test ideal of $f$.

\begin{theorem}[see \autoref{Main}]
	Let $f$ be an element of a complete regular local ring $R$ and $B$ a big Cohen-Macaulay $R^+$-algebra. The set of $B$-jumping numbers for $f$ is the same as the set of $B$-thresholds of $f$.
\end{theorem}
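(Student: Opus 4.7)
The plan is to pass through the BCM test ideal $\tau_B(f^t)$ and interpret the threshold $c^J_B(f)$ as the infimum of $t$ for which $\tau_B(f^t) \sub J$. Once this identification is in hand, both directions of the theorem become essentially formal, paralleling the characteristic $p$ argument of \mustata-Takagi-Watanabe identifying $F$-thresholds with $F$-jumping numbers.

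Most of the work lies in establishing the identity
\[
c^J_B(f) \;=\; \inf\{t \geq 0 : \tau_B(f^t) \sub J\}.
\]
To obtain this, I would unwind the definition $c^J_B(f) = \inf\{t : (fR^+)^{>t} \sub JB\}$ and identify the contraction $(fR^+)^{>t} B \cap R$ with $\tau_B(f^t)$. On one side, since $R$ is complete regular local, the inclusion $R \hookrightarrow B$ is cyclically pure, so the condition $(fR^+)^{>t} \sub JB$ amounts to the contracted ideal being contained in $J$. On the other side, the Ma-Schwede-style construction realizes $\tau_B(f^t)$ as the $R$-annihilator of the BCM closure of appropriate local cohomology classes, and this annihilator should match the contracted ideal via the action of elements of $(fR^+)^{>t}$ on $H^d_\fram(R)\otimes B$.

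Granted the identity, the two directions are clean. Set $c = c^J_B(f)$; using right-continuity of $t \mapsto \tau_B(f^t)$, we get $\tau_B(f^c) \sub J$ while $\tau_B(f^{c-\epsilon}) \not\sub J$ for every $\epsilon > 0$, hence $\tau_B(f^c) \subsetneq \tau_B(f^{c-\epsilon})$ and $c$ is a BCM-jumping number. Conversely, given a BCM-jumping number $t_0$, take $J = \tau_B(f^{t_0})$: then $\tau_B(f^{t_0}) \sub J$ while $\tau_B(f^{t_0 - \epsilon}) \supsetneq J$ for small $\epsilon > 0$, and the identity forces $c^J_B(f) = t_0$.

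The hard part will be the first step: precisely relating the $R^+$-based closure $(fR^+)^{>t} B \cap R$ to the BCM test ideal $\tau_B(f^t)$, especially at the boundary value $t = c^J_B(f)$ where right- versus left-continuity must be handled with care. A secondary but nontrivial ingredient is to verify right-continuity of $t \mapsto \tau_B(f^t)$ and its local constancy away from BCM-jumping numbers; these are standard in the $F$-singularity setting, but need to be reaffirmed in the BCM framework before the argument above can be made rigorous.
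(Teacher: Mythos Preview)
Your overall plan---reduce to the identity $c^J_B(f) = \inf\{t \geq 0 : \tau_B(f^t) \subseteq J\}$ and then argue formally in both directions---is exactly what the paper does (this identity is packaged as the pair of inequalities in \autoref{ThresholdsAndTest}), and your endgame deductions, including the appeal to right-continuity of $t \mapsto \tau_B(f^t)$, are correct.

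The gap is in how you justify the hard direction of that identity, namely $\tau_B(f^t) \subseteq J \Longrightarrow (fR^+)^{>t} \subseteq JB$. You invoke cyclic purity of $R \hookrightarrow B$, but cyclic purity only says $JB \cap R = J$; it does \emph{not} imply that an arbitrary ideal $K \subseteq B$ with $K \cap R \subseteq J$ must satisfy $K \subseteq JB$. What the paper uses instead is the much stronger \emph{Ohm--Rush trace} property of $R \to B$ (\autoref{lemmaEqual}, relying on \cite{datta2023mittagleffler}): every $x \in B$ lies in the $B$-ideal generated by $\{\varphi(x) : \varphi \in \Hom_R(B,R)\}$, equivalently $I_B(I) = IB$ for every ideal $I \subseteq R$. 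Since by construction $(fR^+)^{>t} \subseteq I_B(\tau_B(f^t))$, ORT gives $(fR^+)^{>t} \subseteq \tau_B(f^t)B \subseteq JB$ whenever $\tau_B(f^t) \subseteq J$. Without this input your implication fails, and with it the direction ``every $B$-threshold is a $B$-jumping number.'' A secondary issue: the paper defines $\tau_B(f^t)$ as the \emph{trace} $\sum_{\varphi} \varphi\big((fR^+)^{>t}B\big)$ over all $\varphi \in \Hom_R(B,R)$, not as the contraction $(fR^+)^{>t}B \cap R$; your proposed identification of the two via a Ma--Schwede local-cohomology description is not substantiated here, and in any case the argument still hinges on ORT rather than on cyclic purity or a dual picture.
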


In future work, I plan to explore the case of non-principal ideals.

\textbf{Acknowledgements:} The author would like to thank her advisor Karl Schwede for suggesting this problem to me, for many valuable discussions, and for his support and guidance.
The author would also like to thank Rankeya Datta for pointing out that one of the properties we were missing was proved in \cite{datta2023mittagleffler}.
Additionally, the author would like to thank Jack Jeffries and Ilya Smirnov for helpful comments.
%%%%%%%%%%%%%%%%%%%%%%%%%%%%%%%%%%%%%%%%%%%%%%%%%%%%%%%%%%%%%%%%%%%%%%%%%%%%%%%%%%%%%%%%%%%%

\section{$F$-thresholds via $R^+$}

%%%%%%%%%%%%%%%%%%%%%%%%%%%%%%%%%%%%%%%%%%%%%%%%%%%%%%%%%%%%%%%%%%%%%%%%%%%%%%%%%%%%%%%%%%%%
We start by rewriting the $F$-threshold in terms of $R_{\perf}$.

\begin{proposition}\label{propPerfThreshold}
	Let $R$ be a Noetherian $F$-finite $F$-pure ring, $J$ an ideal of $R$ and $f\in\sqrt{J}$. 
	$$c^J(f) = \inf\left\{\frac{a}{p^e}\middle| f^{\frac{a}{p^e}}\in JR_{\mathrm{perf}}\right\}=
	\sup\left\{\frac{a}{p^e}\middle| f^{\frac{a}{p^e}}\not\in JR_{\mathrm{perf}}\right\}.$$
\end{proposition}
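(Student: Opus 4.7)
The plan is to prove both equalities by unwinding $R_{\perf}$ to translate the condition $f^{a/p^e} \in JR_{\perf}$ into an explicit statement inside $R$, and then matching the resulting set against the limit definition of $c^J(f)$.

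Writing $R_{\perf} = \varinjlim_e R$ with Frobenius as the transition map, the symbol $f^{a/p^e}$ is represented by $f^a$ sitting at level $e$. The first task is to establish the key equivalence
\[
f^{a/p^e} \in JR_{\perf} \Leftrightarrow f^a \in J^{[p^e]}
\]
under the $F$-pure hypothesis. The $\Leftarrow$ direction is immediate: any relation $f^a = \sum j_i^{p^e} s_i$ in $R$ lifts directly to an expression in $JR_{\perf}$. For $\Rightarrow$, unwinding the colimit relation produces some $e' \geq e$ with $(f^a)^{p^{e'-e}} \in (J^{[p^e]})^{[p^{e'-e}]}$; since $F$-purity forces $I^{[p]} \cap R = I$ for every ideal $I$ of $R$, iterating this $e'-e$ times collapses the containment to $f^a \in J^{[p^e]}$. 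The same purity argument also shows $\nu_{e+1}^J(f) \geq p\,\nu_e^J(f)$, so $\nu_e^J(f)/p^e$ is non-decreasing and hence converges monotonically up to $c^J(f) = \sup_e \nu_e^J(f)/p^e$.

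With this in hand, $\{a/p^e : f^{a/p^e} \in JR_{\perf}\}$ coincides with $\{a/p^e : a > \nu_e^J(f)\}$. The values $(\nu_e^J(f)+1)/p^e$ all sit inside this set and converge to $c^J(f)$ as $e \to \infty$, so the infimum is at most $c^J(f)$; conversely, if $a/p^e < c^J(f)$ then monotonicity supplies some large $e'$ with $a/p^e \leq \nu_{e'}^J(f)/p^{e'}$, so $f^{ap^{e'-e}} \notin J^{[p^{e'}]}$ and $a/p^e$ lies outside the set, giving the reverse inequality. The supremum equality follows by the same dichotomy on the complement, using $\sup_e \nu_e^J(f)/p^e = c^J(f)$ directly.

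The main obstacle is the $\Rightarrow$ direction of the equivalence: without $F$-purity, a membership $f^{a/p^e} \in JR_{\perf}$ witnessed at a higher level $e'$ need not descend back to $J^{[p^e]}$, and simultaneously the sequence $\nu_e^J(f)/p^e$ loses its monotonicity. The $F$-pure hypothesis is precisely what secures both pieces of the correspondence in one stroke, and it is the only nontrivial ingredient in the argument.
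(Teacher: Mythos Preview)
Your proof is correct and follows essentially the same route as the paper: both arguments use $F$-purity to descend a membership $f^{a/p^e}\in JR_{\perf}$ witnessed at some higher level $e'$ back down to $f^a\in J^{[p^e]}$, and then compare against $\nu_e^J(f)$. The only cosmetic differences are that the paper works in the $R^{1/p^e}$ model and invokes an explicit splitting $\psi\colon R^{1/p^{e'}}\to R^{1/p^e}$ (available since $F$-finite $+$ $F$-pure $\Rightarrow$ $F$-split), whereas you use the colimit-along-Frobenius model and the equivalent purity statement $r^{p}\in I^{[p]}\Rightarrow r\in I$; your phrasing ``$I^{[p]}\cap R=I$'' for the latter is slightly informal but clear in context.
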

\begin{proof}
	Suppose that $f^{a/p^e}\in JR_{\mathrm{perf}}$.
	It follows that  $f^{a/p^e}\in JR^{1/p^{e'}}$ for some $e'\geq e$.
	Given that $R$ is $F$-pure, there exists $\psi\in \Hom_{R^{1/p^{e}}}(R^{1/p^{e'}},R^{1/p^{e}})$ with $\psi(1)=1$, so
	\begin{align*}
	f^{a/p^e}&=f^{a/p^e}\psi(1)\\
	&=\psi\left(f^{a/p^e}\right)\in \psi\left(JR^{1/p^{e'}}\right)=JR^{1/p^{e}}.
	\end{align*}
	Thus, $f^a\in J^{[p^{e}]}$.
	Hence, we have that $ f^{ap^{e'-e}} \in J^{[p^{e'}]}$ for all $e'\geq e.$
	As a consequence, 
	$$\frac{\nu_{e'}^J(I)}{p^{e'}}\leq \frac{ap^{e'-e}}{p^{e'}}=\frac{a}{p^{e}}.$$
	Therefore,
	$$c^{J}(f)\leq\inf\left\{\frac{a}{p^e}\middle| f^{\frac{a}{p^e}}\in JR_{\mathrm{perf}}\right\}.$$
	
	Now, suppose that $f^{a/p^e}\not\in JR_{\mathrm{perf}}$.
	Then, $f^{a/p^e}\not\in JR^{1/p^{e'}}$ for all $e'$ and,
	as a consequence, $f^{ap^{e'-e}}\not\in J^{[p^{e'}]}$ for all $e'\gg e$.
	It follows that
	$$\frac{a}{p^{e}}=\frac{ap^{e'-e}}{p^{e'}}\leq\frac{\nu_{e'}^J(f)}{p^{e'}}.$$
	Therefore,
	$$c^{J}(f)\geq\sup\left\{\frac{a}{p^e}\middle| f^{\frac{a}{p^e}}\sub JR_{\mathrm{perf}}\right\}.$$
\end{proof}

If $R$ is strongly $F$-regular, we obtain a similar result when considering small perturbations.

\begin{proposition}\label{propPerfTightThreshold}
	Let $R$ be a Noetherian $F$-finite strongly $F$-regular ring, $J$ an ideal of $R$ and $f\in\sqrt{J}$.
	Then,
	$$
	c^J(f) = \inf\left\{\frac{a}{p^e}\middle| d^{\frac{1}{p^{e'}}}f^\frac{a}{p^e}\in JR_{\mathrm{perf}} \text{ for some } d\in R \text{ nonzerodivisor and all } e'\gg 0  \right\}.
	$$	
\end{proposition}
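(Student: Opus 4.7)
The plan is to adapt the proof of \autoref{propPerfThreshold}, using $F$-purity (which follows from strong $F$-regularity) to strip the $p$-th-power root over $R^{1/p^M}$, and then appealing to the triviality of tight closure in a strongly $F$-regular ring to absorb the perturbation $d^{1/p^{e'}}$.

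For the inequality $c^J(f) \leq \inf$, I will start from the hypothesis $d^{1/p^{e'}} f^{a/p^e}\in JR_{\mathrm{perf}}$ for some nonzerodivisor $d$ and all $e'\gg 0$. Restricting to $e' \geq e$ and rewriting the element as $(d f^{ap^{e'-e}})^{1/p^{e'}}\in JR^{1/p^{M}}$ for some $M = M(e') \geq e'$, I can apply an $R^{1/p^{e'}}$-linear splitting $\psi \colon R^{1/p^M} \to R^{1/p^{e'}}$ with $\psi(1) = 1$ (which exists because strong $F$-regularity implies $F$-purity) to obtain $d f^{ap^{e'-e}} \in J^{[p^{e'}]}$ for every $e' \gg e$. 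Setting $k = e'-e$, this reads $d \cdot (f^a)^{p^k} \in (J^{[p^e]})^{[p^k]}$ for all $k \gg 0$, which is precisely the condition that $f^a$ lies in the tight closure $(J^{[p^e]})^*$. Since strong $F$-regularity forces tight closure to be trivial, $f^a \in J^{[p^e]}$, so $f^{ap^k} \in J^{[p^{e+k}]}$ for every $k \geq 0$, and taking the limit gives $c^J(f) \leq a/p^e$.

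For the reverse inequality $\inf \leq c^J(f)$, it suffices to take the nonzerodivisor $d = 1$: then $d^{1/p^{e'}} f^{a/p^e}\in JR_{\mathrm{perf}}$ reduces to the unperturbed condition, so the set appearing in this proposition contains the set from \autoref{propPerfThreshold}, and comparing infima gives the inequality.

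The main conceptual step is recognizing that, once the $p$-th power root is stripped off via the $F$-pure splitting, the hypothesis translates exactly to the classical tight closure membership $f^a \in (J^{[p^e]})^*$; beyond that the argument is a mechanical rewriting of exponents.
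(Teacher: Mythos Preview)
Your proof is correct, and the two arguments coincide up to the point where the $F$-pure splitting is used to land in $JR^{1/p^{e'}}$; they diverge in how strong $F$-regularity is applied afterwards. You rewrite the containment as $d\,(f^a)^{p^k}\in(J^{[p^e]})^{[p^k]}$ for all $k\gg 0$, recognize this as the tight-closure membership $f^a\in(J^{[p^e]})^*$ (using that in a reduced ring a nonzerodivisor avoids all minimal primes), and conclude $f^a\in J^{[p^e]}$ from the fact that strongly $F$-regular rings are weakly $F$-regular. The paper instead invokes the defining splitting of strong $F$-regularity directly: it chooses an $R^{1/p^s}$-linear map $\varphi\colon R^{1/p^i}\to R^{1/p^s}$ with $\varphi(d^{1/p^i})=1$, applies it to $d^{1/p^i}f^{a/p^e}\in JR^{1/p^i}$ to obtain $f^{a/p^e}\in JR_{\mathrm{perf}}$, and then cites \autoref{propPerfThreshold}. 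Your route packages the work into the standard black box ``strongly $F$-regular $\Rightarrow$ all ideals tightly closed,'' which is cleaner to state; the paper's route is more self-contained and avoids naming tight closure. Unwinding the proof of that implication shows the two arguments are essentially the same computation.
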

\begin{proof}
	It follows from \autoref{propPerfThreshold} that
	$$
	c^J(f) \geq\inf\left\{\frac{a}{p^e}\middle| d^{\frac{1}{p^{e'}}}f^\frac{a}{p^e}\in JR_{\mathrm{perf}} \text{ for some } d\in R \text{ nonzerodivisor and all } e'\gg 0  \right\}.
	$$
	
	Suppose that $d^{\frac{1}{p^{e'}}}f^\frac{a}{p^e}\in JR_{\mathrm{perf}}$ for some nonzerodivisor $d\in R$ and all $e'\gg  e$.
	We have that $d^{\frac{1}{p^{e'}}}f^\frac{a}{p^e}\in JR^{1/p^{e''}}$ for some $e''>e'$.
	Given that $R$ is $F$-split, there exists $\psi\in \Hom_{R^{1/p^{e'}}}(R^{1/p^{e''}},R^{1/p^{e'}})$ with $\psi(1)=1$.
	Thus, $$d^{\frac{1}{p^{e'}}}f^\frac{a}{p^e}=\psi\left(d^{\frac{1}{p^{e'}}}f^\frac{a}{p^e}\right)\in JR^{1/p^{e'}}$$ for all $e'>0$.
	Since $R$ is strongly $F$-regular, there exists an $R^{1/p^s}$-linear map $\varphi: R^{1/p^{i}}\rightarrow R^{1/p^s}$ with $\varphi(d^{1/p^{i}})=1$ for some $s,i>e$.
	We have that
	\begin{align*}
		\varphi\left(d^{\frac{1}{p^i}}f^\frac{a}{p^e}\right)&= f^\frac{a}{p^e} \varphi\left(d^{\frac{1}{p^{i}}}\right)\\
		&= f^\frac{a}{p^e} 
	\end{align*}
	Thus, $f^\frac{a}{p^e} \in\varphi(JR^{1/p^{i}})\subseteq JR_{\mathrm{perf}}$.
	By \autoref{propPerfThreshold},
	$$
	c^J(x) \leq \inf\left\{\frac{a}{p^e}\middle| d^{\frac{1}{p^{e'}}}f^\frac{a}{p^e} \in JR_{\mathrm{perf}} \text{ for some } d\in R \text{ nonzerodivisor and all } e'\gg 0  \right\}.
	$$
\end{proof}

We will use the following well-known fact (see, for example \cite{SmithTightClosureParameter}) to express the $F$-threshold using $R^+$:

\begin{lemma}\label{lemmaTight}
	Let $R\subseteq S$ be a finite extension of Noetherian domains. 
	Then for every ideal $I\subseteq R$ there exists a nonzero $x\in R$ such that 
	$$x(IS\cap R)^{[p^e]}\subseteq I^{[p^e]}$$
	for all $e\in\mathbb{N}.$
\end{lemma}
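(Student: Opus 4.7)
The approach is the classical one used in the theory of tight closure: exploit that a finite extension of domains is generically free, and use this to produce the multiplier $x$.

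First, I would set $n = [\mathrm{Frac}(S):\mathrm{Frac}(R)]$ and pick elements $s_2,\ldots,s_n\in S$ so that $\{1,s_2,\ldots,s_n\}$ is a basis of $\mathrm{Frac}(S)$ over $\mathrm{Frac}(R)$. Setting $F = R\cdot 1\oplus Rs_2\oplus\cdots\oplus Rs_n\subseteq S$, the module $F$ is a free $R$-module (the chosen elements are linearly independent over $R$, being so over the fraction field) which contains $R$ as a direct summand. Since $F\otimes_R\mathrm{Frac}(R) = S\otimes_R\mathrm{Frac}(R)$, the quotient $S/F$ is a finitely generated torsion $R$-module, and hence is annihilated by some nonzero $x\in R$; that is, $xS\subseteq F$. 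This $x$ will be the multiplier in the statement.

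Next, I would break the containment into two halves. For the first half, observe that for any $s\in IS\cap R$, writing $s$ as an $S$-linear combination of generators of $I$ and applying the Frobenius (a ring homomorphism in characteristic $p$) gives $s^{p^e}\in (IS)^{[p^e]} = I^{[p^e]}S$. Hence
$$ (IS\cap R)^{[p^e]}\subseteq I^{[p^e]}S\cap R. $$
For the second half, take any $y\in I^{[p^e]}S\cap R$. Then
$$ xy\in I^{[p^e]}\cdot xS\subseteq I^{[p^e]}\cdot F. $$
Because $F = R\oplus(Rs_2\oplus\cdots\oplus Rs_n)$ as $R$-modules, we have $I^{[p^e]}F\cap R = I^{[p^e]}$, so $xy\in I^{[p^e]}$. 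Combining the two halves yields
$$ x(IS\cap R)^{[p^e]}\subseteq x(I^{[p^e]}S\cap R)\subseteq I^{[p^e]} $$
for every $e\in\mathbb{N}$, with the crucial point that the element $x$ depends only on $R\subseteq S$, not on $I$ or $e$.

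The only mildly delicate step is the construction of $F$ as a free $R$-module that contains $1$ as part of a basis (so that $R$ is a genuine direct summand, which is needed for the intersection identity $I^{[p^e]}F\cap R = I^{[p^e]}$); the remaining content is formal manipulation of Frobenius powers and the generic-freeness multiplier $x$. No assumption on $R$ beyond being a Noetherian domain is required, which matches the hypotheses.
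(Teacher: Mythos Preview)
Your proof is correct, but it takes a different (though closely related) route from the paper's. The paper simply picks a nonzero $R$-linear map $\varphi\colon S\to R$ (such a map exists because $\Hom_R(S,R)$ is generically nonzero, as $\mathrm{Frac}(S)$ is finite over $\mathrm{Frac}(R)$) and sets $x=\varphi(1)$; applying $\varphi$ to the identity $f^{p^e}=\sum a_i^{p^e}g_i^{p^e}$ for $f\in IS\cap R$ immediately gives $\varphi(1)\,f^{p^e}\in I^{[p^e]}$. Your argument instead builds an explicit free submodule $F\subseteq S$ with $R$ as a direct summand and $xS\subseteq F$, and then uses the splitting $F=R\oplus F'$ to contract $I^{[p^e]}F$ back to $I^{[p^e]}$. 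The two approaches are really two packagings of the same idea: composing your multiplication-by-$x$ map $S\to F$ with the projection $F\to R$ onto the first summand produces precisely a map $\varphi$ of the type the paper uses, with $\varphi(1)=x$. The paper's version is shorter and avoids the mild care needed to ensure $1$ is part of the chosen basis; yours makes the generic-freeness structure explicit and cleanly isolates the intermediate containment $(IS\cap R)^{[p^e]}\subseteq I^{[p^e]}S\cap R$, which is a useful statement in its own right.
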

\begin{proof}
	Given that $\mathrm{frac}(R)\otimes \Hom_R(S,R)=\Hom_{\mathrm{frac}(R)}(\mathrm{frac}(S),\mathrm{frac}(R))\neq 0$, we have that there exists a map $\varphi\in\Hom_R(S,R)$ with $\varphi(1)\neq 0$.
	Let $f\in IS\cap R$. Then $f=\sum_{i=1}^na_ig_i$ where $g_i\in I$ and $a_i\in S$ for all $i$.
	Thus, $f^{p^e}=\sum_{i=1}^na_i^{p^e}g_i^{p^e}$ and, applying $\varphi$, 
	\begin{align*}
		\varphi(1)f^{p^e}=\varphi(f^{p^e})=\sum_{i=1}^n\varphi(a_i^{p^e})g_i^{p^e}\in I^{[p^e]}.
	\end{align*}
	Since $\varphi(1)\neq 0$ and $f\in IS\cap R$ is arbitrary, the result follows.
\end{proof}

\begin{proposition}\label{R+Threshold}
	Let $R$ be a Noetherian $F$-finite strongly $F$-regular domain, $J$ an ideal of $R$ and $f\subseteq\sqrt{J}$. Then,
	$$
	c^J(f) = \inf\left\{\frac{a}{p^e}\middle| f^{\frac{a}{p^e}}\in JR^{+} \right\}=
	\sup\left\{\frac{a}{p^e}\middle| f^{\frac{a}{p^e}}\not\in JR^{+} \right\}.
	$$
\end{proposition}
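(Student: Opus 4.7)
The plan is to transfer the characterization from $R_{\perf}$ (established in \autoref{propPerfTightThreshold}) to $R^+$, using \autoref{lemmaTight} to descend statements about $R^+$ back to a finite extension. The inequality $c^J(f)\geq \inf\{a/p^e : f^{a/p^e}\in JR^+\}$ is immediate from \autoref{propPerfThreshold}: since $R_{\perf}\subseteq R^+$, the set of exponents $a/p^e$ with $f^{a/p^e}\in JR_{\perf}$ is contained in the analogous set for $JR^+$, and the infimum of a larger set is smaller.

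For the reverse inequality, I would suppose $f^{a/p^e}\in JR^+$ and aim to show $c^J(f)\leq a/p^e$. The element $f^{a/p^e}$, together with the witnesses to its membership in $JR^+$, lies in some finite domain extension $R\subseteq S$ inside $R^+$, so in fact $f^{a/p^e}\in JS$. Raising to the $p^e$-th power then gives $f^a\in J^{[p^e]}S\cap R$. Applying \autoref{lemmaTight} to the ideal $J^{[p^e]}\subseteq R$ produces a nonzero $x\in R$ with $x\bigl(J^{[p^e]}S\cap R\bigr)^{[p^m]}\subseteq J^{[p^{e+m}]}$ for every $m\geq 0$, and in particular $xf^{ap^m}\in J^{[p^{e+m}]}$ for all $m$. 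Taking $p^{e+m}$-th roots inside $R_{\perf}$ yields
$$x^{1/p^{e+m}}f^{a/p^e}\in JR^{1/p^{e+m}}\subseteq JR_{\perf}$$
for every $m\geq 0$. Since $x$ is a nonzerodivisor, \autoref{propPerfTightThreshold} then gives $c^J(f)\leq a/p^e$, which is exactly what is needed.

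The main obstacle is the multiplier $x$ produced by \autoref{lemmaTight}: it prevents a naive descent landing directly in $JR_{\perf}$, and this is precisely the gap that the small-perturbation version \autoref{propPerfTightThreshold} was designed to bridge using the strong $F$-regularity hypothesis. Without such a perturbation clause (\emph{e.g.}\ working only with \autoref{propPerfThreshold}), the factor $x^{1/p^{e+m}}$ would have no reason to split off, and the descent step would fail; this is what forces the hypothesis of strong $F$-regularity into the statement rather than mere $F$-purity.
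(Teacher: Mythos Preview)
Your argument is correct and follows essentially the same route as the paper's proof: both establish the easy inequality via \autoref{propPerfThreshold}, then descend a containment $f^{a/p^e}\in JS$ for a finite $S\subseteq R^+$ to a perturbed containment $x^{1/p^{e'}}f^{a/p^e}\in JR_{\perf}$ using \autoref{lemmaTight}, and finish with \autoref{propPerfTightThreshold}. The only cosmetic difference is that you first raise to the $p^e$-th power and apply \autoref{lemmaTight} to $J^{[p^e]}$ over $R$, whereas the paper applies \autoref{lemmaTight} directly to $JR^{1/p^e}$ over $R^{1/p^e}$; these are equivalent via the Frobenius isomorphism $R\cong R^{1/p^e}$.
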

\begin{proof}
	It follows from \autoref{propPerfThreshold} that
	$$
	c^J(f) \geq \inf\left\{\frac{a}{p^e}\middle| f^{\frac{a}{p^e}}\in JR^+ \right\}.
	$$
	
	Suppose that $f^{\frac{a}{p^e}}\in JR^+$. 
	Then there exists a finite extension $R^{1/p^e}\subseteq S$ such that $f^{\frac{a}{p^e}}\in JS$.
	By \autoref{lemmaTight},  there exists $x\in R^{1/p^e}\setminus \{0\}$ such that 
	$$x(JS\cap R^{1/p^e})^{[p^s]}\subseteq (JR^{1/p^e})^{[p^s]}$$
	for all $s$.
	Since
	$$x^{p^{e}}(JS\cap R^{1/p^e})^{[p^s]}\subseteq (JR^{1/p^e})^{[p^s]}$$
	for all $s$, we may assume that $x\in R$.
	It follows that
	$$x^{\frac{1}{p^s}}f^{\frac{a}{p^e}}\in x^{\frac{1}{p^s}}(JS\cap R^{1/p^e})R_{\mathrm{\perf}}\subseteq JR_{\mathrm{\perf}}.$$
	for all $s$.
	By \autoref{propPerfTightThreshold}, we have that $\frac{a}{p^e}\geq c^J(f)$. Thus,
	$$
	c^J(f) \leq \inf\left\{\frac{a}{p^e}\middle| f^{\frac{a}{p^e}}\in JR^+ \right\}.
	$$
\end{proof}

We would like to characterize $F$-thresholds without using the Frobenius map. We begin by defining a notion of real powers of a principal ideal for a ring extension $R\hookrightarrow S$.
\begin{definition}[\textit{cf.}\cite{HunekeSwansonIntegralClosure,bisui2024rational}]
	Let $R$ be a Noetherian ring, not necessarily of prime characteristic, $S$ and $R$-algebra, $f\in R$ and $t\in \R_{\geq 0}$. We define 
	$$(fS)^{>t}=\{g\in S\mid g^n\in f^m S \text{ for some } \frac{m}{n}>t\}.$$
\end{definition}

\begin{remark}
	Suppose that $g^n\in f^m S$ for some $\frac{m}{n}>t$. Let $e$ be such that
	$$\frac{m}{n}-\frac{m}{p^e}>t$$ and let $a$ be such that $p^e>an$ and $p^e-an<n$.
	Since $g^{an}\in f^{am} S$, we have that $g^{p^e}\in f^{am} S$.
	On the other hand, 
	\begin{align*}
		t<\frac{m}{n}-\frac{m}{p^e}=\frac{(p^e-n)m}{np^e}\leq \frac{anm}{np^e}=\frac{am}{p^e}.
	\end{align*}
	It follows that
	$$(fS)^{>t}=\{g\in S\mid g^{p^e}\in f^a S \text{ for some } \frac{a}{p^e}>t\}$$
	We also have that
	$$(fS)^{>t_2}\sub (fS)^{>t_1}$$
	if $t_1<t_2$
	and
	$$(fS)^{>t}=\bigcup_{s>t} (fS)^{>s}.$$
\end{remark}

When the ring is of prime characteristic and $S=R^+$, it suffices to consider the ideal of $S^+$ generated by elements of the form $f^{a/p^e}$, where $a/p^e>t$.
\begin{lemma} \label{lemmaDescriptionOfPowersInPrimeCharacteristic}
	If $R$ is a Noetherian ring of characteristic $p>0$ and $f\in R$,
	$$(fR^+)^{>t}=\{f^{a/p^e}\, \mid \, a/p^e>t\}R^+.$$
\end{lemma}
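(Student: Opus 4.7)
The plan is to combine the characterization
$$(fR^+)^{>t}=\{g\in R^+ \mid g^{p^e}\in f^a R^+ \text{ for some } a/p^e>t\}$$
recorded in the preceding remark with two characteristic-$p$ features of $R^+$: first, the freshman's dream $(x+y)^{p^e}=x^{p^e}+y^{p^e}$; second, that $R^+$, being absolutely integrally closed, contains a (necessarily unique, since $R^+$ is a domain) $p^e$th root of every one of its elements. In particular, for any $a,e\in\N$ the symbol $f^{a/p^e}$ denotes a well-defined element of $R^+$, and enlarging the denominator is consistent because both $f^{a/p^e}$ and $f^{(ap)/p^{e+1}}$ are $p^{e+1}$th roots of $f^{ap}$ in a domain of characteristic $p$, hence equal.

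For the containment $\{f^{a/p^e}\mid a/p^e>t\}R^+\subseteq (fR^+)^{>t}$, I will take a finite $R^+$-linear combination $g=\sum_{i=1}^k r_i f^{a_i/p^{e_i}}$ with each $a_i/p^{e_i}>t$, bring all exponents over a common denominator $p^e$ via the previous paragraph, and raise to the $p^e$th power. The freshman's dream gives
$$g^{p^e}=\sum_{i=1}^k r_i^{p^e} f^{b_i}\in f^b R^+,\qquad b=\min_i b_i,$$
and $b/p^e>t$ because a finite minimum of quantities strictly exceeding $t$ still strictly exceeds $t$; so $g\in(fR^+)^{>t}$ by the remark.

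For the reverse inclusion, if $g\in(fR^+)^{>t}$, then $g^{p^e}=f^a h$ for some $h\in R^+$ and some $a/p^e>t$. Taking $p^e$th roots inside $R^+$, I have $(f^{a/p^e}h^{1/p^e})^{p^e}=f^a h=g^{p^e}$, and the freshman's dream applied to the difference yields $(g-f^{a/p^e}h^{1/p^e})^{p^e}=0$, so $g=f^{a/p^e}h^{1/p^e}\in f^{a/p^e}R^+$ because $R^+$ is a domain. The only subtle point in the whole argument is the bookkeeping of $p^e$th roots across varying denominators, and that is settled at the outset by the uniqueness noted above.
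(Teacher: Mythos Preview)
Your proof is correct and follows essentially the same approach as the paper's: both use the remark's characterization $(fR^+)^{>t}=\{g\mid g^{p^e}\in f^aR^+\text{ for some }a/p^e>t\}$ and then extract $p^e$th roots inside the perfect domain $R^+$ to get $g\in f^{a/p^e}R^+$. The paper's proof is terser---it simply asserts the inclusion $\{f^{a/p^e}\mid a/p^e>t\}R^+\subseteq(fR^+)^{>t}$ without argument---whereas you verify it directly via the freshman's dream on a common $p$-power denominator, which has the minor advantage of not presupposing that $(fR^+)^{>t}$ is closed under addition.
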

\begin{proof}
	We have that
	$\{f^{a/p^e}\, \mid \, a/p^e>t\}R^+\sub (fR^+)^{>t}$.
	Now suppose that $g\in (fR^+)^{>t}$.
	 Then, $g^{p^e}\in f^aR^+$ for some $a/p^e>t$ and $g\in f^{a/p^e}R^+$.	
\end{proof}

The following characterization now follows immediately.
\begin{corollary}
	Let $R$ be a Noetherian $F$-finite strongly $F$-regular domain, $J$ an ideal of $R$ and $f\subseteq\sqrt{J}$. Then,
	$$
	c^J(f) = \inf\left\{t\in\R_{\geq0}\middle| (fR^+)^{>t}\sub JR^{+} \right\}=
	\sup\left\{t\in\R_{\geq0}\middle| (fR^+)^{>t}\not\sub JR^{+} \right\}.
	$$
\end{corollary}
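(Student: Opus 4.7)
The plan is to deduce this directly by combining \autoref{R+Threshold} with \autoref{lemmaDescriptionOfPowersInPrimeCharacteristic}. The former gives
$$c^J(f) = \inf\left\{\tfrac{a}{p^e}\,\middle|\, f^{\frac{a}{p^e}}\in JR^{+} \right\},$$
and the latter identifies $(fR^+)^{>t}$ as the $R^+$-ideal generated by all $f^{a/p^e}$ with $a/p^e > t$. Hence $(fR^+)^{>t}\subseteq JR^+$ if and only if $f^{a/p^e}\in JR^+$ for every rational $a/p^e > t$.

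First I would record an upward-closure property for the set $T:=\{a/p^e \mid f^{a/p^e}\in JR^+\}$: if $a/p^e \in T$ and $b/p^{e'} > a/p^e$, then $f^{b/p^{e'}} = f^{a/p^e}\cdot f^{b/p^{e'} - a/p^e}$, where the second factor is a $p^{e+e'}$-th root of the nonnegative power $f^{bp^e-ap^{e'}}\in R$ and therefore lies in $R^+$; so $b/p^{e'}\in T$. Uniqueness of $p$-power roots in the reduced ring $R^+$ makes this factorization unambiguous.

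Next I would verify the two inequalities. If $t > c^J(f)$ and $a/p^e > t$, the definition of $c^J(f)$ as an infimum yields some $b/p^{e'}\in T$ with $b/p^{e'} < a/p^e$, and upward-closure then forces $f^{a/p^e}\in JR^+$; hence $(fR^+)^{>t}\subseteq JR^+$, giving $\inf\{t \mid (fR^+)^{>t}\subseteq JR^+\}\leq c^J(f)$. Conversely, if $t < c^J(f)$, any rational $a/p^e\in(t, c^J(f))$ satisfies $f^{a/p^e}\notin JR^+$ by definition of $c^J(f)$, so $(fR^+)^{>t}\not\subseteq JR^+$. Combining these yields equality with the infimum, and since $\{(fR^+)^{>t}\}_{t\geq 0}$ is decreasing in $t$, the set $\{t \mid (fR^+)^{>t}\subseteq JR^+\}$ is upward closed in $\R_{\geq 0}$, so its infimum agrees with the supremum of its complement and the $\sup$ formulation follows. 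I do not foresee any real obstacle; the argument is essentially bookkeeping on top of the two cited results.
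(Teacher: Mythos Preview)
Your proposal is correct and follows the same route as the paper: the corollary is stated there as an immediate consequence of \autoref{R+Threshold} and \autoref{lemmaDescriptionOfPowersInPrimeCharacteristic}, and the paper gives no further argument. Your write-up simply spells out the bookkeeping (upward closure of the set $T$ and the density of $p$-power rationals) that the paper leaves implicit.
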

%%%%%%%%%%%%%%%%%%%%%%%%%%%%%%%%%%%%%%%%%%%%%%%%%%%%%%%%%%%%%%%%%%%%%%%%%%%%%%%%%%%%%%

\section{BCM-thresholds}

%%%%%%%%%%%%%%%%%%%%%%%%%%%%%%%%%%%%%%%%%%%%%%%%%%%%%%%%%%%%%%%%%%%%%%%%%%%%%%%%%%%%%%
Hochster and Huneke proved the existence of a big Cohen-Macaulay algebra in equal characteristic.
In particular, when $R$ is a local Noetherian domain of prime characteristic, Hochster and Huneke (\cite{HochsterHunekeInfiniteIntegralExtensionsAndBigCM}) proved that $R^+$ is a big Cohen-Macaulay $R$-algebra.  
In mixed characteristic, Andr\'e (\cite{AndreWeaklyFunctorialBigCM}) proved the existence of big Cohen-Macaulay algebras,
and Bhatt (\cite{BhattAbsoluteIntegralClosure}) proved that the $p$-completion of $R^+$ is a big Cohen-Macaulay algebra.

Inspired by the previous section, we define a characteristic free analog of the $F$-thresholds.

\begin{definition}[BCM-thresholds]\label{BCMThresholdDef}
	Let $R$ be a Noetherian ring, not necessarily of prime characteristic, and $B$ a big Cohen-Macaulay $R^+$-algebra.
	We define 
	$$c^J_B(f)=\inf\left\{t\in\R_{\geq0}\middle| (fR^+)^{>t}\sub JB \right\}=
	\sup\left\{t\in\R_{\geq0}\middle| (fR^+)^{>t}\not\sub JB \right\}.$$
\end{definition}

We now record some basic properties of this thresholds.
\begin{proposition}
	Let $R$ be a Noetherian ring, not necessarily of prime characteristic, $I,J$ be ideals of $R$, $f\in \sqrt{J}$, and $B$ a big Cohen-Macaulay $R^+$-algebra.
	\begin{enumerate}
		\item If $J\subseteq I$, then $c^I_B(f)\leq c^J_B(f)$
		\item $c^J_B(f^r)=\frac{1}{r}c^J_B(f)$ for $r\in\Z_{>0}$.
		\item If $R\hookrightarrow S$ is an integral extension of complete domains, then
		$$c^J_B(f)=c^{JS}_B(f)$$
	\end{enumerate}
\end{proposition}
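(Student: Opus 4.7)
The plan is threefold, following the three parts of the proposition. Part (1) is an immediate consequence of monotonicity: since $J\sub I$ implies $JB\sub IB$, every $t\geq 0$ for which $(fR^+)^{>t}\sub JB$ automatically satisfies $(fR^+)^{>t}\sub IB$, so the set whose infimum defines $c^J_B(f)$ is contained in the one defining $c^I_B(f)$, and taking infima yields $c^I_B(f)\leq c^J_B(f)$.

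For part (2), my key step is to establish the set-theoretic identity
$$(f^r R^+)^{>t}=(fR^+)^{>rt}$$
for every $t\geq 0$. This is a direct unpacking of definitions: $g\in (f^rR^+)^{>t}$ means $g^n\in (f^r)^mR^+=f^{rm}R^+$ for some $m/n>t$, which is the same as saying $g^n\in f^{rm}R^+$ for some $(rm)/n>rt$, namely $g\in (fR^+)^{>rt}$. Granted this identity, the condition $(f^rR^+)^{>t}\sub JB$ is equivalent to $(fR^+)^{>rt}\sub JB$; substituting $s=rt$ in the infimum and pulling out the factor $1/r$ then gives $c^J_B(f^r)=\frac{1}{r}c^J_B(f)$.

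For part (3), my plan is to show the two thresholds arise as infima over identical sets. Because $R\hookrightarrow S$ is an integral extension of domains, every element integral over $R$ is also integral over $S$ and vice versa; after fixing compatible algebraic closures of $\mathrm{Frac}(R)\sub \mathrm{Frac}(S)$, this yields $R^+=S^+$, and hence $(fR^+)^{>t}=(fS^+)^{>t}$ for every $t\geq 0$. On the other hand, since $S\sub R^+$, the $R^+$-algebra structure on $B$ restricts to an $S$-algebra structure, so $SB=B$ and $(JS)B=J(SB)=JB$ as ideals of $B$. The defining sets for $c^J_B(f)$ and $c^{JS}_B(f)$ therefore coincide, and the equality follows.

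The only real care required is in part (3), in making the identification $R^+=S^+$ precise when $R$ and $S$ have formally different fraction fields; this is standard but is the one place a reader might pause. The completeness hypothesis plays no direct role in the above: it appears only so that $B$ sits in the BCM setting in which the thresholds were defined, and since $R^+=S^+$ the BCM $R^+$-algebra structure and the BCM $S^+$-algebra structure on $B$ are literally the same datum. I do not anticipate any substantive obstacle beyond these bookkeeping checks.
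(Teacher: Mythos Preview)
Your proof is correct and follows essentially the same approach as the paper: part (1) is immediate from the definition, part (2) reduces to the identity $(f^rR^+)^{>t}=(fR^+)^{>rt}$, and part (3) uses $R^+=S^+$. You spell out the step $(JS)B=JB$ via $S\subseteq R^+$, which the paper leaves implicit, but otherwise the arguments coincide.
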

\begin{proof}$ $
	\begin{enumerate}
		\item It follows directly from the definition.
		\item We have that
			\begin{align*}
				(f^rR^+)^{>t}&=\{g\in R^+\mid g^n\in f^{rm} R^+ \text{ for some } \frac{m}{n}>t\}\\
					&=\{g\in R^+\mid g^n\in f^m R^+ \text{ for some } \frac{m}{n}>rt\}\\
					&=(fR^+)^{>rt}.
			\end{align*}
			Thus, 
			\begin{align*}
				c^J_B(f^r)=\inf\left\{t\in\R_{\geq0}\middle| (fR^+)^{>rt}\sub JB \right\}=\frac{1}{r}c^J_B(f)
			\end{align*}
		\item Since $S^+=R^+$, we have that $(fS^+)^{>t}=(fR^+)^{>t}$. Thus, $c^J_B(f)=c^{JS}_B(f)$.
	\end{enumerate}
\end{proof}

We want to turn our attention to an analog of an $F$-pure thresholds. In order to do so, we will now focus on the case where $(R,m)$ is a complete regular local ring. In this case, the ring satisfies the following useful property:

\begin{definition}[\cite{datta2023mittagleffler}]
	We say that a ring homomorphism $R\rightarrow S$ is Ohm-Rush trace (ORT) if, 
	for every $x\in S$,
	$$x\in \{\varphi(x) |  \varphi\in \Hom_R(S,R)\}S$$
\end{definition}

Under these conditions, $c^m_B(f)$ provides information about when the map $R\rightarrow B$ sending $1 \text{ to } f^t$ splits.
In order to prove this, we will use the following notation.

\begin{definition}[\textit{cf.} \cite{AberbachEnescuStructureOfFPure}]
	Let $	J$ be an ideal of $R$, and $B$ a big Cohen-Macaulay $R$-algebra. 
	We define
	$$I_{B}(J)=\{x\in B | \varphi(x)\in J \text{ for all } \varphi\in \Hom_R(B,R)\}.$$
\end{definition}

When $R$ is a complete regular local ring, we have the following result, inspired by Fedder's Criterion (\cite{FedderFPureRat}).
\begin{lemma}[\cite{datta2023mittagleffler}] \label{lemmaEqual}
	Let $J$ be an ideal of a complete regular local ring $R$, and $B$ a big Cohen-Macaulay $R^+$-algebra. 
	We have that $I_{B}(J)=JB$.
\end{lemma}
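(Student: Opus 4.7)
The plan is to prove the two inclusions separately, and to observe that the non-trivial one is essentially the Ohm--Rush trace (ORT) property of $R \to B$.

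The inclusion $JB \subseteq I_B(J)$ is immediate from the definitions: if $x = \sum_i j_i b_i$ with $j_i \in J$ and $b_i \in B$, then for any $\varphi \in \Hom_R(B,R)$ one has $\varphi(x) = \sum_i j_i \varphi(b_i) \in J$ since $J$ is an ideal of $R$, hence $x \in I_B(J)$.

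For the reverse inclusion $I_B(J) \subseteq JB$, the plan is to invoke the ORT property of $R \to B$, which is the input from \cite{datta2023mittagleffler} credited in the acknowledgments. ORT says that every $x \in B$ admits a finite representation
$$x \;=\; \sum_{i=1}^{n} \varphi_i(x)\, b_i, \qquad \varphi_i \in \Hom_R(B,R),\ b_i \in B.$$
If in addition $x \in I_B(J)$, then by definition each $\varphi_i(x)$ lies in $J$, so the displayed equation exhibits $x$ as an element of $JB$. Note that conversely, applying the lemma to the trace ideal $J = \{\varphi(x) : \varphi \in \Hom_R(B,R)\}R$ of a given $x \in B$ recovers ORT, so the content of the lemma is exactly Datta's ORT result rephrased in ideal-theoretic terms.

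The main obstacle is therefore verifying ORT itself, which I would not attempt to reprove from scratch. The substance of \cite{datta2023mittagleffler} is to produce sufficiently many $R$-linear maps $B \to R$ so that the traces $\{\varphi(x)\}_\varphi$ generate $x$ up to $B$-multiples. This relies on the complete regularity of $R$ (so that $\omega_R = R$ and Matlis duality applies on both sides), the big Cohen--Macaulay hypothesis on $B$ (so that $H^d_{\fram}(B)$ is rich enough for local duality to supply the needed homomorphisms $B \to R$), and the Mittag--Leffler structure of $R^+$ as a colimit of finite extensions of $R$. Once ORT is in hand, the deduction of the lemma is the purely formal one-line argument above.
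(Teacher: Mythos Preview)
Your proof is correct and matches the paper's own argument essentially line for line: both prove the trivial inclusion $JB \subseteq I_B(J)$ directly and then deduce $I_B(J) \subseteq JB$ by invoking the ORT property of $R \to B$ established in \cite{datta2023mittagleffler} (the paper cites Corollary 5.3.16 there). Your additional commentary on the equivalence with ORT and on the ingredients behind it is extra but does not alter the approach.
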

\begin{proof}
	Let $x\in I_{B}(J)$. 
	By Corollary 5.3.16. in \cite{datta2023mittagleffler}, $B$ is ORT.
	Thus, $$x\in\{\varphi(x) | \varphi\in \Hom_R(B,R)\}B\subseteq JB.$$
	Since $I_{B}(J)\supseteq JB$, it follows that $I_{B}(J)=JB.$
\end{proof}

We now connect $I_{B}(m)$ back to splittings.

\begin{proposition}
	Let $(R, m)$ be a complete regular local ring, $f$ be a nonzero element of R, and $B$ a big Cohen-Macaulay $R^+$-algebra. 
	Then, for every  $t\in \mathbb{Q}$, $f^t\not\in mB$ if and only if the map $R\rightarrow B$ sending $1$ to $f^t$ splits.
\end{proposition}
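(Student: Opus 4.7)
My plan is to deduce this essentially formally from \autoref{lemmaEqual}, which identifies $mB$ with the submodule $I_B(m)$ of elements killed by every $R$-linear map $B\to R$. Since $t\in\mathbb{Q}$, the element $f^t$ makes sense as a (chosen) root in $R^+$ and is mapped into $B$ via the structural $R^+$-algebra map; the proposed map $R\to B$ sending $1\mapsto f^t$ is then the $R$-linear map $r\mapsto rf^t$, so a splitting is an $R$-linear $\psi\colon B\to R$ with $\psi(f^t)=1$.

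For the forward direction, assume $f^t\notin mB$. By \autoref{lemmaEqual}, $mB=I_B(m)$, so there exists $\varphi\in\Hom_R(B,R)$ with $\varphi(f^t)\notin m$. Since $(R,m)$ is local, $u:=\varphi(f^t)$ is a unit, and the rescaled map $\psi:=u^{-1}\varphi\in\Hom_R(B,R)$ satisfies $\psi(f^t)=1$. This $\psi$ is the required splitting.

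For the converse, suppose the map $R\to B$, $1\mapsto f^t$, admits a splitting $\psi\colon B\to R$. Then $\psi(f^t)=1\notin m$, so $f^t\notin I_B(m)$; by \autoref{lemmaEqual} again, $f^t\notin mB$.

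No step looks genuinely difficult once \autoref{lemmaEqual} is available; the only point deserving care is verifying that a ``splitting'' is literally the data of an $R$-linear retraction sending $f^t$ to $1$ (as opposed to some stronger condition), and that one is allowed to rescale by a unit to normalize $\varphi(f^t)=1$. Both are immediate from the assumption that $(R,m)$ is local, so the main obstacle in the section was really \autoref{lemmaEqual} itself, which is already in hand.
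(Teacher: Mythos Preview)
Your proof is correct and follows essentially the same route as the paper's: both directions hinge on \autoref{lemmaEqual} identifying $mB$ with $I_B(m)$, and then a unit rescaling produces the splitting. The only cosmetic difference is that the paper dispatches the converse directly (if $f^t\in mB$ then any $R$-linear map $B\to R$ sends $f^t$ into $m$, so no splitting exists) rather than invoking \autoref{lemmaEqual} a second time as you do; both are equally valid.
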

\begin{proof}
	If $f^t\in mB$, then the map $R\rightarrow B$ sending $1$ to $f^t$ does not split.
	
	If $f^t\not\in mB$, by \autoref{lemmaEqual}, $f^t\not\in I_B(m)$.
	Thus there exists $\varphi\in \Hom_R(B,R)$ such that $\varphi(f^t)\not \in m$.
	It follows that the map $R\rightarrow B$ sending $1$ to $f^t$ splits.
\end{proof}

\begin{definition}
	Let $R$ be a Noetherian domain, not necessarily of prime characteristic, $f\in R$, and $B$ a big Cohen-Macaulay $R^+$-algebra.
	We define 
	$$c_B(f)=\sup\left\{t\in\Q_{\geq0}\middle| \text{the map } R\rightarrow B \text{ sending } 1 \text{ to } f^t \text{ splits}\right\}.$$
\end{definition}

\begin{remark}
	Assume $f\neq0$. Let $b\in\ZZ_{>0}$ and suppose that $g^b=h^b=f$ for $g,h\in R^+$. 
	Then there exists an $b$-th root of unity $u$ such that $g=uh$.
	Thus, if $t=\frac{a}{b}\in\mathbb{Q}_{>0}$, the map $R\rightarrow B$ sending $1$ to $g^a$ splits if and only if
	the map $R\rightarrow B$ sending $1$ to $h^a$ splits.
	It follows that $c_B(f)$ is well-defined.
\end{remark}

\begin{corollary}
	Let $(R, m)$ be a complete regular local ring and $B$ a big Cohen-Macaulay $R^+$-algebra. Then 
	$c^m_B(f^t)=c_B(f^t)$
\end{corollary}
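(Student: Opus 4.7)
The plan is to prove both inequalities $c_B(f^t) \leq c^m_B(f^t)$ and $c_B(f^t) \geq c^m_B(f^t)$ separately. For notational ease I set $g = f^t$, reducing the claim to $c^m_B(g) = c_B(g)$. The preceding proposition identifies $g^s \notin mB$ with the splitting of $R \to B$, $1 \mapsto g^s$, for rational $s \geq 0$, so
\[
c_B(g) = \sup\{s \in \Q_{\geq 0} \mid g^s \notin mB\},
\]
which is the form I will actually use.

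For the inequality $c_B(g) \leq c^m_B(g)$, I would pick any rational $s = a/b$ with $g^s \notin mB$ and set $h = g^s \in R^+$, so that $h^b = g^a$. For every rational $r < s$ this certifies $h \in (gR^+)^{>r}$, and since $h \notin mB$ it follows that $(gR^+)^{>r} \not\subseteq mB$, hence $r \leq c^m_B(g)$. Taking the supremum first over $r < s$ and then over admissible $s$ yields $c_B(g) \leq c^m_B(g)$.

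For the reverse inequality, I would start with a rational $r < c^m_B(g)$, so that there exists $y \in (gR^+)^{>r}$ with $y \notin mB$. Unpacking the definition, $y^\ell = g^k z$ for integers $k,\ell$ with $k/\ell > r$ and some $z \in R^+$ (I rename the exponents to avoid a clash with the maximal ideal). Because $R^+$ is absolutely integrally closed I can fix $\ell$-th roots $g^{k/\ell}, z^{1/\ell} \in R^+$; then $(g^{k/\ell} z^{1/\ell})^\ell = y^\ell$, so $y$ and $g^{k/\ell} z^{1/\ell}$ differ by an $\ell$-th root of unity. Roots of unity are units in $R^+$, so $g^{k/\ell} z^{1/\ell} \notin mB$, and since $mB$ is an ideal this forces $g^{k/\ell} \notin mB$. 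By the preceding proposition the map $1 \mapsto g^{k/\ell}$ splits, giving $c_B(g) \geq k/\ell > r$; letting $r$ approach $c^m_B(g)$ gives $c_B(g) \geq c^m_B(g)$.

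The only delicate point, and the one I would be most careful about, is that the symbol $g^{k/\ell}$ is well defined only up to an $\ell$-th root of unity. Because these roots of unity are units in $R^+$, both the condition $g^{k/\ell} \in mB$ and the existence of a splitting of $1 \mapsto g^{k/\ell}$ are independent of the chosen root, exactly as observed in the remark preceding the definition of $c_B$. Beyond this bookkeeping I do not foresee a serious obstacle; the whole argument is essentially the observation that $\{h \in (gR^+)^{>r} : h \notin mB\}$ contains, up to units, a pure power $g^s$ for $s$ arbitrarily close to $c^m_B(g)$ from below.
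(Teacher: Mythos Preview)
Your argument is correct and is precisely the verification the paper leaves implicit: the corollary is stated without proof, as an immediate consequence of the preceding proposition, and what you have written is the natural way to unpack that. The only content beyond the proposition is the observation (your second inequality) that any $y\in (gR^+)^{>r}$ factors, up to a unit in $R^+$, as $g^{k/\ell}z^{1/\ell}$, so containment of $(gR^+)^{>r}$ in $mB$ is detected already by the pure powers $g^{k/\ell}$; this is exactly what you do.
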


%%%%%%%%%%%%%%%%%%%%%%%%%%%%%%%%%%%%%%%%%%%%%%%%%%%%%%%%%%%%%%%%%%%%%%%%%%%%%%%%%%%%%%

\section{BCM-test ideals}

%%%%%%%%%%%%%%%%%%%%%%%%%%%%%%%%%%%%%%%%%%%%%%%%%%%%%%%%%%%%%%%%%%%%%%%%%%%%%%%%%%%%%%
We now turn our attention to BCM-test ideals (see \cite{MaSchwedeTuckerWaldronWitaszekAdjoint, MaSchwedeSingularitiesMixedCharBCM,MaSchwedePerfectoidTestideal, BMPSTWW1, MurayamaUniformBoundsOnSymbolicPowers,HaconLamarcheSchwede, BMPSTWW2, RobinsonBCMTestIdealsMixedCharToric}). 
\begin{definition}
	Let $R$ be a complete Noetherian ring, not necessarily of prime characteristic, and $B$ a big Cohen-Macaulay $R^+$-algebra.
	For $t>0$,we define 
	$$\tau_{B}(f^{t})=\{\varphi(x) | x\in (fR^+)^{>t}B,\, \varphi\in \Hom(B,R)\}.$$
	Additionally, if $R$ has a canonical module $\omega_R$, we define
	$$\tau_{B}(\omega_R, f^{t})=\{\varphi(x) | x\in (fR^+)^{>t}B,\, \varphi\in \Hom(B,\omega_R)\}.$$
\end{definition}

\begin{remark}
	Since $R$ is Noetherian, 
	for every $t$ there exists $\epsilon>0$ such that $\tau_{B}(f^{t_1})=\tau_B(f^{t_0})$ if $t_1,t_0\in(t,t+\epsilon)$.
	Since $$(fR^+)^{>t}=\bigcap_{s>t} (fR^+)^{>s},$$ it follows that
	$$\tau_{B}(f^{t})=\{\varphi(x) | x\in (fR^+)^{>t}B,\, \varphi\in \Hom(B,R)\}=\tau_B(f^{t_0})$$
	for all $t_0\in[t,t+\epsilon)$.
	Similarly, for every $t$ there exists $\epsilon>0$ such that $\tau_{B}(\omega_R, f^{t})=\tau_B(\omega_R, f^{t_0})$ if $t_0\in[t,t+\epsilon)$.
\end{remark}

\begin{definition}
	We say that $\alpha\in R_{\geq 0}$ is a $B$-jumping number for $f$ if $\tau_{B}(f^{\alpha})\neq\tau_B(f^{\alpha-\epsilon})$ for all $\epsilon$.
\end{definition}

Under mild hypotheses, $\tau_{R^+}(f^{t})$ agrees with $\tau(f^t)$. We expect this is well-known for the experts, but we include a proof for the reader's convenience.

\begin{proposition}
	Let $f$ be an element of an $F$-finite complete local domain $R$ with canonical module $\omega_R$. For $t>0$
		$$\tau(\omega_R, f^t)=\tau_{R^+}(\omega_R, f^{t}).$$
	In particular, if $R$ is quasi-Gorenstein,
		$$\tau(f^t)=\tau_{R^+}(f^{t}).$$
\end{proposition}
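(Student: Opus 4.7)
The approach is to match both ideals against a common presentation. For $R$ $F$-finite with canonical module, Grothendieck duality for the finite morphism $R\hookrightarrow R^{1/p^e}$ identifies $\Hom_R(R^{1/p^e},\omega_R)\cong\omega_R^{1/p^e}$, under which the standard test-ideal description can be rewritten as
$$\tau(\omega_R,f^t)=\sum_{\substack{a,e\ge 0\\ a/p^e>t}}\ \sum_{\phi\in\Hom_R(R^{1/p^e},\omega_R)} R\cdot\phi\bigl(f^{a/p^e}\bigr),$$
where $f^{a/p^e}\in R^{1/p^e}$ denotes the unique $p^e$-th root of $f^a$. (One uses the Noetherian stability of $\tau$ on small exponent intervals to pass between the usual ``$\lceil tp^e\rceil$'' convention and arbitrary $a/p^e>t$.) Since each such $f^{a/p^e}\in R^{1/p^e}\subseteq R^+$ lies in $(fR^+)^{>t}$ by \autoref{lemmaDescriptionOfPowersInPrimeCharacteristic}, both ideals are generated by evaluating $R$-linear maps at the same elements; they differ only in whether the domain is taken to be $R^{1/p^e}$ or $R^+$.

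For the easier inclusion $\tau_{R^+}(\omega_R,f^t)\subseteq\tau(\omega_R,f^t)$, take a generator $\varphi(z)$ with $\varphi\in\Hom_R(R^+,\omega_R)$ and $z\in(fR^+)^{>t}R^+$. By \autoref{lemmaDescriptionOfPowersInPrimeCharacteristic}, we may reduce to $z=y\cdot f^{a/p^e}$ with $y\in R^+$ and $a/p^e>t$. Defining $\varphi'(r):=\varphi(yr)$ produces a new $R$-linear map $R^+\to\omega_R$ satisfying $\varphi(z)=\varphi'(f^{a/p^e})$; restricting to $R^{1/p^e}$ yields $\phi:=\varphi'|_{R^{1/p^e}}\in\Hom_R(R^{1/p^e},\omega_R)$ with $\phi(f^{a/p^e})=\varphi(z)$, which lies in $\tau(\omega_R,f^t)$ by the displayed presentation.

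For the reverse inclusion $\tau(\omega_R,f^t)\subseteq\tau_{R^+}(\omega_R,f^t)$, it suffices to extend each $\phi\in\Hom_R(R^{1/p^e},\omega_R)$ to some $\widetilde\phi\in\Hom_R(R^+,\omega_R)$; then $\widetilde\phi(f^{a/p^e})=\phi(f^{a/p^e})$ witnesses the desired membership. Writing $R^+=\varinjlim_S S$ over the finite normal $R^{1/p^e}$-subalgebras $S\subseteq R^+$ and identifying $\Hom_R(S,\omega_R)=\omega_S$ via Grothendieck duality, the extension problem becomes: lift $\phi\in\omega_{R^{1/p^e}}$ through the tower of transition trace maps $\omega_S\to\omega_{R^{1/p^e}}$ to an element of $\varprojlim_S\omega_S$. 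This lifting is the main obstacle: trace maps of canonical modules along finite extensions of normal local domains in characteristic $p$ are not automatically surjective, nor is the inverse system automatically Mittag-Leffler. The plan is to invoke the Ohm--Rush/Mittag--Leffler technology of \cite{datta2023mittagleffler} (the same input powering \autoref{lemmaEqual}) to produce the required compatible family, and thereby the extension $\widetilde\phi$.
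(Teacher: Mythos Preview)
Your easy inclusion $\tau_{R^+}(\omega_R,f^t)\subseteq\tau(\omega_R,f^t)$ by restriction is fine, but the reverse inclusion has a real gap. Extending an arbitrary $\phi\in\Hom_R(R^{1/p^e},\omega_R)\cong\omega_{R^{1/p^e}}$ to a map $R^+\to\omega_R$ amounts to asking that the evaluation-at-$1$ map
\[
\Hom_R(R^+,\omega_R)\;\cong\;\Hom_{R^{1/p^e}}(R^+,\omega_{R^{1/p^e}})\;\longrightarrow\;\omega_{R^{1/p^e}}
\]
be surjective. By Smith's theorem the image of this map is exactly the parameter test submodule $\tau(\omega_{R^{1/p^e}})$, so the extension exists for all $\phi$ precisely when $R$ is $F$-rational. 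No hypothesis of that strength is present here, and the result of \cite{datta2023mittagleffler} you invoke (the ORT property used in \autoref{lemmaEqual}) is stated for complete \emph{regular} local rings; it does not furnish the lifting you need for a general $F$-finite complete local domain.

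The paper sidesteps this extension problem entirely. It first proves, for rational $s$, the identity
\[
\tau(\omega_R,f^s)=\{\varphi(f^s)\mid \varphi\in\Hom_R(R^+,\omega_R)\}
\]
by choosing a finite extension $S$ with $f^s\in S$, applying the transformation rule $\tau(\omega_R,f^s)=\Tr_{S/R}(f^s\,\tau(\omega_S))$ from \cite{BlickleSchwedeTuckerTestAlterations}, and then using Smith's description $\tau(\omega_S)=\{\psi(1)\mid\psi\in\Hom_S(R^+,\omega_S)\}$ together with the adjunction $\Hom_R(R^+,\omega_R)\cong\Hom_S(R^+,\omega_S)$. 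In your language this says: although only $\tau(\omega_{R^{1/p^e}})\subseteq\omega_{R^{1/p^e}}$ is hit by maps from $R^+$, that is exactly the part that matters for computing $\tau(\omega_R,f^s)$. The inclusion $\tau(\omega_R,f^t)\subseteq\tau_{R^+}(\omega_R,f^t)$ then follows by picking rational $t+\epsilon>t$ with $\tau(\omega_R,f^t)=\tau(\omega_R,f^{t+\epsilon})$ and observing $f^{t+\epsilon}\in(fR^+)^{>t}$. You should replace the extension argument with this combination of the BST transformation rule and Smith's theorem.
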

\begin{proof}
	We first prove that $\tau(\omega_R, f^t)=\{\varphi(f^t) | \varphi\in \Hom(R^+,\omega_R)\}$ for $t\in\mathbb{Q}$. 
	Let $S$ be a finite extension of $R$ such that $f^t\in S$.
	Then, by Proposition 4.4 in \cite{BlickleSchwedeTuckerTestAlterations},
	$$\tau(\omega_R, f^t)=\Tr(\tau(\omega_S,f^t))=\Tr(f^t\tau(\omega_S)).$$
	On the other hand, we have that
	\begin{align*}
		\Hom_R(R^+,\omega_R)&=\Hom_R(R^+\tensor S,\omega_R)\\
		&=\Hom_S(R^+, \Hom_R(S,\omega_R))\\
		&=\Hom_S(R^+, \omega_S).
	\end{align*}
	Under this correspondence, 
	\begin{align*}
		\{\varphi(f^t) | \varphi\in \Hom(R^+,\omega_R)\}&=\{\psi(f^t)(1) | \psi\in\Hom_S(R^+, \Hom_R(S,\omega_R))\}\\
		&=\Tr(f^t\{\psi(1) | \psi\in\Hom_S(R^+,\omega_S)\}).
	\end{align*}
	Since $\tau(\omega_S)=\{\psi(1) | \psi\in\Hom_S(R^+,\omega_S)\}$ by Matlis duality applied to Theorem 5.1 in \cite{SmithTightClosureParameter},
	we have that $\tau(\omega_R, f^t)=\{\varphi(f^t) | \varphi\in \Hom(R^+,\omega_R)\}$ for $t\in\mathbb{Q}$.
	
	It follows that, for $t\in \Q$, $\tau(\omega_R, f^t)\supseteq\tau_{R^+}(\omega_R, f^t)$.
	Additionally, there exists $\epsilon>0$ such that $t+\epsilon\in\mathbb{Q}$ and
	$\tau(\omega_R, f^t)=\tau(\omega_R, f^{t+\epsilon})$.
	Since $\tau(\omega_R, f^{t+\epsilon})=\{\varphi(f^{t+\epsilon}) | \varphi\in \Hom(R^+,\omega_R)\}$,
	$\tau(\omega_R, f^t)\subseteq \tau_{R^+}(\omega_R, f^t)$.	
\end{proof}

We now come to our main result. 

\begin{proposition}[\textit{cf.} \cite{MustataTakagiWatanabeFThresholdsAndBernsteinSato, BlickleMustataSmithDiscretenessAndRationalityOfFThresholds}]\label{ThresholdsAndTest}
	Let $f$ be an element of a complete regular local ring $R$ and $B$ a big Cohen-Macaulay $R^+$-algebra. 
	\begin{enumerate}
		\item We have that $$\tau_{B}(f^{c^J_B(f)})\subseteq J.$$\label{ThresholdsAndTesta}
		\item For $\alpha\geq 0$, $$c_B^{\tau_{B}(f^{\alpha})}\left(f\right)\leq \alpha.$$\label{ThresholdsAndTestb}
	\end{enumerate}
\end{proposition}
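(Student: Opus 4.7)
The plan is to handle the two parts independently, each by tracing definitions and exploiting the single observation that any $R$-linear map $\varphi\colon B\to R$ sends $JB$ into $J$.

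For part (1), set $c=c^J_B(f)$. The key step is the set-theoretic containment $(fR^+)^{>c}\subseteq JB$, after which applying any $\varphi\in\Hom_R(B,R)$ yields $\tau_B(f^c)\subseteq J$ by $R$-linearity of $\varphi$. To obtain this containment I would first observe that, by the infimum defining $c$, for every $s>c$ there exists $t\in(c,s]$ with $(fR^+)^{>t}\subseteq JB$; combined with the monotonicity $(fR^+)^{>s}\subseteq(fR^+)^{>t}$ recorded in the remark following the definition of $(fS)^{>t}$, this upgrades to $(fR^+)^{>s}\subseteq JB$ for every $s>c$. The identity $(fR^+)^{>c}=\bigcup_{s>c}(fR^+)^{>s}$ from that same remark then closes the argument, and since $JB$ is an ideal of $B$ we also get the $B$-module containment $(fR^+)^{>c}B\subseteq JB$ needed to read off $\tau_B(f^c)\subseteq J$.

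For part (2), it suffices to prove the single containment $(fR^+)^{>\alpha}\subseteq\tau_B(f^\alpha)B$, since this bounds the infimum defining $c^{\tau_B(f^\alpha)}_B(f)$ by $\alpha$. Here is where \autoref{lemmaEqual} is brought to bear: viewing $\tau_B(f^\alpha)$ as an ideal of the complete regular local ring $R$, the lemma identifies $\tau_B(f^\alpha)B$ with $I_B(\tau_B(f^\alpha))$. The problem therefore reduces to checking that for every $x\in(fR^+)^{>\alpha}$ and every $\varphi\in\Hom_R(B,R)$ one has $\varphi(x)\in\tau_B(f^\alpha)$. But that is immediate from the very definition of the test ideal once we regard $x\in(fR^+)^{>\alpha}\subseteq(fR^+)^{>\alpha}B$ via the structure map $R^+\to B$.

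The only delicate point is confirming that $\tau_B(f^\alpha)$ is genuinely an ideal of $R$, since \autoref{lemmaEqual} is stated for an ideal. Closure under multiplication by $R$ is immediate from $r\varphi(x)=\varphi(rx)$, but closure under addition takes a brief argument. I would either invoke the literature on BCM-test ideals cited at the start of the section, or, as a safety net, replace $\tau_B(f^\alpha)$ by the ideal it generates in $R$: the argument for part (2) still places $(fR^+)^{>\alpha}$ inside the extension of this larger ideal to $B$, which is all that is required to bound the threshold. Beyond this caveat I do not anticipate any technical obstacle; the entire argument is essentially bookkeeping combining the definitions with \autoref{lemmaEqual}.
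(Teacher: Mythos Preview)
Your proposal is correct and tracks the paper's argument closely; for part~(2) it is identical. For part~(1) there is a small difference worth noting: the paper picks $\alpha>c^J_B(f)$ with $\tau_B\bigl(f^{c^J_B(f)}\bigr)=\tau_B(f^{\alpha})$, invoking the Noetherian stabilization remark following the definition of $\tau_B$, and then uses $(fR^+)^{>\alpha}\subseteq JB$; you instead bypass this by showing directly that $(fR^+)^{>c^J_B(f)}\subseteq JB$ via the union identity $(fR^+)^{>c}=\bigcup_{s>c}(fR^+)^{>s}$. Your route is marginally more elementary since it does not rely on that stabilization. Your caution about whether $\tau_B(f^\alpha)$ is literally an ideal, and your fallback of passing to the ideal it generates, are both reasonable; the paper simply applies \autoref{lemmaEqual} without comment on this point.
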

\begin{proof}\,
	\begin{enumerate}
		\item Take $\alpha>c_B^J(f)$ such that $\tau_{B}(f^{c_B^J(f)})=\tau_{B}(f^{\alpha})$.
		Then, we have that $(fR^+)^{>\alpha}\subseteq JB$. It follows from the definition of $\tau_{B}(f^{\alpha})$ that
		$\tau_{B}(f^{\alpha})\subseteq J.$ Hence $\tau_{B}(f^{c_B^J(f)})\subseteq J.$
		
		\item By the definition of $\tau_{B}(f^{\alpha})$, we have that $(fR^+)^{>\alpha}\subseteq I_{B}(\tau_{B}(f^{\alpha}))$.
		By \autoref{lemmaEqual},  $(fR^+)^{>\alpha}\subseteq \tau_{B}(f^{\alpha})B$.
		It follows that $c^{\tau_{B}(I^{\alpha})}\left(f\right)\leq \alpha.$
	\end{enumerate}
\end{proof}

\begin{corollary}[\textit{cf.} \cite{MustataTakagiWatanabeFThresholdsAndBernsteinSato, BlickleMustataSmithDiscretenessAndRationalityOfFThresholds}]\label{Main}
	Let $f$ be an element of a complete regular local ring $R$ and $B$ a big Cohen-Macaulay $R^+$-algebra. The set of $B$-jumping numbers for $f$ is the same as the set of $B$-thresholds of $f$.
\end{corollary}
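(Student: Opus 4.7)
The plan is to establish both inclusions of sets separately, using \autoref{ThresholdsAndTest} as the main tool, together with the monotonicity properties: since $(fR^+)^{>t_2}\subseteq (fR^+)^{>t_1}$ whenever $t_1<t_2$, we have $\tau_B(f^{t_1})\supseteq \tau_B(f^{t_2})$, so the function $t\mapsto \tau_B(f^t)$ is decreasing in $t$, while $J\mapsto c^J_B(f)$ is decreasing in $J$.

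For the inclusion ``jumping numbers $\subseteq$ thresholds'', given a $B$-jumping number $\alpha$ I would set $J:=\tau_B(f^\alpha)$ and prove that $c^J_B(f)=\alpha$. The bound $c^J_B(f)\le\alpha$ is immediate from \autoref{ThresholdsAndTest}(\ref{ThresholdsAndTestb}). If the inequality were strict, there would be $\epsilon>0$ with $c^J_B(f)\le \alpha-\epsilon$; applying \autoref{ThresholdsAndTest}(\ref{ThresholdsAndTesta}) to the threshold at level $c^J_B(f)$ yields $\tau_B(f^{c^J_B(f)})\subseteq J=\tau_B(f^\alpha)$. On the other hand monotonicity forces $\tau_B(f^{c^J_B(f)})\supseteq \tau_B(f^{\alpha-\epsilon})$, and the jumping hypothesis gives $\tau_B(f^{\alpha-\epsilon})\supsetneq \tau_B(f^\alpha)$, a contradiction.

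For the reverse inclusion ``thresholds $\subseteq$ jumping numbers'', suppose $\alpha=c^J_B(f)$ but $\alpha$ is not a $B$-jumping number, so there exists $\epsilon>0$ with $\tau_B(f^{\alpha-\epsilon})=\tau_B(f^\alpha)$. By \autoref{ThresholdsAndTest}(\ref{ThresholdsAndTesta}) we have $\tau_B(f^\alpha)\subseteq J$, hence $\tau_B(f^{\alpha-\epsilon})\subseteq J$. Now for any $x\in (fR^+)^{>\alpha-\epsilon}$ and any $\varphi\in\Hom_R(B,R)$, by definition $\varphi(x)\in \tau_B(f^{\alpha-\epsilon})\subseteq J$, so $x\in I_B(J)$. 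Invoking \autoref{lemmaEqual} (the key place where completeness and regularity of $R$ enter, through the ORT property of $B$), this means $x\in JB$. Therefore $(fR^+)^{>\alpha-\epsilon}\subseteq JB$, which contradicts $\alpha-\epsilon<\alpha=c^J_B(f)$.

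The argument is largely formal bookkeeping once the two earlier results are in hand; the hard work has been absorbed into \autoref{ThresholdsAndTest} and into \autoref{lemmaEqual}. The only delicate point is keeping the direction of every $\subseteq$ straight: one must simultaneously track that $\tau_B(f^t)$ is decreasing in $t$, that $c^J_B(f)$ is decreasing in $J$, and that the definition of $\tau_B$ together with \autoref{lemmaEqual} is what forces the sharpness of $c^J_B(f)$ in the second direction.
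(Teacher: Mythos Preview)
Your proof is correct and follows essentially the same approach as the paper: both directions use \autoref{ThresholdsAndTest} together with \autoref{lemmaEqual} exactly as you do, with the paper's argument for ``jumping $\Rightarrow$ threshold'' phrased via the equality $\tau_B(f^\alpha)=\tau_B\bigl(f^{c_B^{\tau_B(f^\alpha)}(f)}\bigr)$ rather than your monotonicity chain, and the ``threshold $\Rightarrow$ jumping'' direction matching yours almost verbatim.
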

\begin{proof}
	Suppose that $\alpha$ is an $B$-jumping number.
	By \autoref{ThresholdsAndTest}\autoref{ThresholdsAndTestb}, $c_B^{\tau_{B}(f^{\alpha})}\left(f\right)\leq \alpha$ and, as a consequence,
	$$\tau_{B}(f^{\alpha})\subseteq \tau_{B}\left(f^{c_B^{\tau_{B}(f^{\alpha})}(f)}\right)$$
	By \autoref{ThresholdsAndTest}\autoref{ThresholdsAndTesta}, $$\tau_{B}\left(f^{c_B^{\tau_{B}(f^{\alpha})}(f)}\right)\subseteq \tau_{B}(f^{\alpha}).$$
	Hence, $$\tau_{B}(f^{\alpha})= \tau_{B}\left(f^{c_B^{\tau_{B}(f^{\alpha})}(f)}\right).$$
	Given that $\alpha$ is a jumping number and $c_B^{\tau_{B}(f^{\alpha})}\left(f\right)\leq \alpha$, $c_B^{\tau_{B}(I^{\alpha})}\left(I\right)=\alpha$.
	
	Let $c=c_B^J(f)$ for some $J$ and assume that $\tau_{B}(f^{c})= \tau_{B}(f^{c'})$  for some $c'<c$.
	By \autoref{ThresholdsAndTest}\autoref{ThresholdsAndTesta}, $\tau_{B}(f^{c'})=\tau_{B}(f^{c_B^J(f)})\subseteq J$ and so
	$\tau_{B}(f^{c'})\subseteq J.$
	It follows from the definition of $\tau_{B}(f^{c'})$ that $(fR^+)^{>c'}\subseteq I_{B}(\tau_{B}(f^{c'}))$.
	By \autoref{lemmaEqual},  $(fR^+)^{>c'}\subseteq \tau_{B}(f^{c'})B$.
	Thus, $(fR^+)^{>c'}\subseteq\tau_{B}(f^{c'})B\subseteq JB$ and $c'\geq c_B^J(f)$.
	It follows that $c>c'\geq c_B^J(f)$, a contradiction.
	Therefore, $c$ is a jumping number.
\end{proof}

\bibliographystyle{skalpha}
\bibliography{main}

\end{document}